\newtheorem{lemma}{Lemma}[section]
\newtheorem{theorem}{Theorem}[section]
\newtheorem{definition}{Definition}[section]
\newtheorem{remark}{Remark}[section]
\numberwithin{equation}{section} \numberwithin{theorem}{section}
\numberwithin{example}{section} \numberwithin{remark}{section}
\numberwithin{figure}{section} \numberwithin{algorithm}{section}
\def\ds{\displaystyle}
\begin{document}
\title[Density estimates]{A further remark on the density estimate for degenerate Allen-Cahn equations: $\Delta_{p}$-type equations for $1<p<\frac{n}{n-1}$ with rough coefficients}

\author{Chilin Zhang}
\address{School of Mathematical Sciences, Fudan University, Shanghai 200433, China}\email{zhangchilin@fudan.edu.cn}

\begin{abstract}
    In this short remark on a previous paper \cite{SZ25}, we continue the study of Allen-Cahn equations associated with Ginzburg-Landau energies
    \begin{equation*}
        J(v,\Omega)=\int_{\Omega}\Big\{F(\nabla v,v,x)+W(v,x)\Big\}dx,
    \end{equation*}
    involving a Dirichlet energy $F(\vec{\xi},\tau,x)\sim|\vec{\xi}|^{p}$ and a degenerate double-well potential $W(\tau,x)\sim(1-\tau^{2})^{m}$. In contrast to \cite{SZ25}, we remove all regularity assumptions on the Ginzburg-Landau energy. Then, with further assumptions that $1<p<\frac{n}{n-1}$ and that $W(\tau,x)$ is monotone in $\tau$ on both sides of $0$, we establish a density estimate for the level sets of nontrivial minimizers $|u|\leq1$.
\end{abstract}

\maketitle

\section{Introduction}
The Ginzburg-Landau energy was developed from the theory of Van der Waals (see \cite{Row79}) by Landau, Ginzburg and Pitaevski\u i in \cite{GP58,Landau37,Landau67} to describe phase transitions in thermodynamics (see also \cite{AC72,CH58}). In this paper, we study a global minimizer $u:\mathbb{R}^{n}\to[-1,1]$ of the Ginzburg-Landau energy
\begin{equation}\label{eq. GL energy}
    J(v,\Omega)=\int_{\Omega}\Big\{F(\nabla v,v,x)+W(v,x)\Big\}dx,
\end{equation}
in which $v$ represents the mean field of the spin of the particles. A minimizer of \eqref{eq. GL energy} is defined as follows:
\begin{definition}
    Let $\Omega\subseteq\mathbb{R}^{n}$. We say that $u\in W^{1,p}(\Omega,[-1,1])$ is a minimizer of \eqref{eq. GL energy}, if for every bounded open set $\Omega' \subset\subset \Omega$, and any $v\in W^{1,p}_{loc}(\Omega,[-1,1])$ such that $u=v$ in $\Omega\setminus \Omega'$, we have
    \begin{equation*}
        J(u,\Omega')\leq J(v,\Omega').
    \end{equation*}
\end{definition}
\begin{remark}
    When $F(\vec{\xi},\tau,x)$ and $W(\tau,x)$ have good regularity, in particular, when
\begin{equation*}
    F(\vec{\xi},\tau,x)\equiv|\vec{\xi}|^{p},\quad W(\tau,x)\equiv(1-v^{2})^{m},
\end{equation*}
then the Euler-Lagrange equation of a minimizer, namely the Allen-Cahn equation, is the following:
\begin{equation}\label{eq. allen cahn equation}
    p\Delta_{p}u=p\cdot\mathrm{div}(|\nabla u|^{p-2}\nabla u)=W'(u)=-2m(1-u^{2})^{m-1}u.
\end{equation}
\end{remark}
However, in this paper, we avoid the use of the Euler-Lagrange equation because we do not impose any regularity assumptions. Instead, we assume that there exists some universal constant $\lambda>1$ such that:
\begin{itemize}
    \item[(A)] For every vector $\vec{\xi}$, $\tau\in[-1,1]$ and $x\in\mathbb{R}^{n}$, we have
    \begin{equation*}
        \lambda^{-1}|\vec{\xi}|^{p}\leq F(\vec{\xi},\tau,x)\leq\lambda|\vec{\xi}|^{p},
    \end{equation*}
    where the exponent $p$ is universal and it satisfies $1<p<\frac{n}{n-1}$;
    \item[(B)] For every $\tau\in[-1,1]$, we have that
    \begin{equation*}
        \lambda^{-1}(1-\tau^{2})^{m}\leq W(\tau,x)\leq\lambda(1-\tau^{2})^{m},
    \end{equation*}
    where the exponent $m$ is universal and it satisfies $m>p$;
    \item[(C)] For each fixed $x$, $W(\tau,x)$ is increasing when $-1\leq\tau\leq0$, and it is decreasing when $0\leq\tau\leq1$.
\end{itemize}

Apart from the trivial minimizers $u\equiv\pm1$, the more complicated and interesting question is to study minimizers or critical points representing phase transitions, i.e. a solution that can be sufficiently close to both $1$ and $-1$ (two steady states), but with a phase field region $|u|\leq1-\varepsilon$ in between.

It is well known that phase transitions modeled by minimizers $u$ defined in a large ball $B_R$ are closely related to minimal surfaces. More precisely, the rescaling of the transition region $\frac 1R \{|u| \le 1-\varepsilon\}$ of $u$ from $B_R$ to the unit ball $B_1$ is well approximated by a minimal surface in $B_1$. In the classical case $p=m=2$ the approximation is made rigorous in three main steps:

\begin{itemize}
    \item[(1)] The $\Gamma$-convergence result established by Modica and Mortola in \cite{Modica,MM77}, see also \cite{Bou90,OS91}.
    \item[(2)] The density estimate obtained by Caffarelli-C\'ordoba in \cite{CC95}, see also \cite{DFV18,FV08,PV05b,PV05a,V04}.
    \item[(3)] The convergence of $\{u_R=0\}$ to $\partial E$ in the stronger $C^{2,\alpha}_{loc}(B_1)$ sense, i.e.: the improvement of flatness technique, see \cite{S09,SV05,VSS06}.
\end{itemize}

The heteroclinical solution is the monotone one-dimensional solution that connects the stable phases $- 1$ and $+1$ as $x$ ranges from $-\infty$ to $ \infty$. The rate of decay of this solution to the limits $\pm 1$ depends on the values of $m$ and $p$. Precisely, if $m <p$ then the limits are achieved outside a finite interval, producing a free boundary of Alt-Phillips type for the region $u\neq\pm1$. If $m=p$ the rate of decay is exponential. On the other hand, the case $m> p$ produces less stable minimal points (still at $\pm1$) for an infinitesimal potential energy $W(v)\sim(1-v^{2})^{m}$, and the rate of decay is polynomial. To see this, one can multiply $u'(t)$ on both sides of the one-dimensional version of \eqref{eq. allen cahn equation} and integrate. It follows that the heteroclinical solution satisfies the first-order ODE
\begin{equation*}
    u'(t)=\Big(\frac{(1-u^{2})^{m}}{p-1}\Big)^{1/p}.
\end{equation*}
The decay rate is then obtained by integrating the equation $\ds\frac{u'(t)}{(1-u^{2})^{m/p}}\sim1$. Recently, the decay rate estimate for the heteroclinical solution has been extended to the nonlocal Allen-Cahn equation in De Pas-Dipierro-Piccinini-Valdinoci \cite{dPDPV25}. It is then natural to investigate whether the results of $\Gamma$-convergence, density estimate, and improvement of flatness mentioned above extend to these types of degenerate Ginzburg-Landau energies.

In \cite{DFV18}, Dipierro-Farina-Valdinoci considered $Q$-minimizers (a relaxation of the terminology minimizer) of such degenerate energies and obtained the density estimates for a certain range of $m$'s depending on the dimension $n$. Precisely, the authors considered general Ginzburg-Landau type energies
\begin{equation}\label{eq. DFV18 assumption}
    J(v)=\int_{\Omega}E(\nabla v,v,x)dx,\quad E(\vec{\xi},\tau,x)\sim(|\vec{\xi}|^{p}+|\tau+1|^{m}).
\end{equation}
If $\frac{pm}{m-p}>n$ and $\Big|\{u\geq0\}\cap B_{1}\Big|>c$ for some positive $c$, then the authors showed that
\begin{equation*}
    \Big|\{u\geq0\}\cap B_{R}\Big|\geq\delta R^{n},\quad\mbox{for some }\delta>0\mbox{ depending on }E(\cdot,\cdot,\cdot)\mbox{ and }c.
\end{equation*}
Notice that the energy \eqref{eq. GL energy} with assumptions $\mathrm{(A)(B)(C)}$ on $F(\vec{\xi},\tau,x)$ and $W(\tau,x)$ is a special case of the energy \eqref{eq. DFV18 assumption}. We also remark that the density estimates in the non-degenerate case $0<m\leq p$ were obtained in the earlier work Farina-Valdinoci \cite{FV08}.

In Savin-Zhang \cite{SZ25}, by further assuming that there exists an initial ball $B_\rho$ of a fixed large radius in which the density estimate holds (see \eqref{eq. additional assumption} below), the authors removed the assumption $\frac{pm}{m-p}>n$ and obtained a new version of the density estimate in \cite[Theorem 1.1]{SZ25}. We state its simplified version below as a lemma:
\begin{lemma}\label{lem. SZ25}
    Let $u$ be a minimizer of the energy \eqref{eq. DFV18 assumption} in $\mathbb{R}^{n}$. Given any $\varepsilon >0$, there exist $r_{0}=r_{0}(\varepsilon)$ large and $\delta=\delta(\varepsilon)$, so that if
    \begin{equation}\label{eq. additional assumption}
        \Big|\{u\geq0\}\cap B_{r}\Big|\geq\varepsilon r^{n}
    \end{equation}
    for some $r\geq r_{0}$, then $\Big|\{u\geq0\}\cap B_{R}\Big|\geq\delta R^{n}$, for all $R\geq r$.
\end{lemma}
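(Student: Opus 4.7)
The plan is to adapt the iterative density-estimate scheme of Caffarelli--C\'ordoba \cite{CC95} to the degenerate setting, propagating density from the seed ball $B_r$ to every larger ball via a one-scale comparison step. Let $V(R) := |\{u \geq 0\} \cap B_R|$; the hypothesis gives $V(r) \geq \varepsilon r^n$, and the target is $V(R) \geq \delta R^n$ for all $R \geq r$. The classical route is to establish a discrete inequality of the form
\begin{equation*}
V(R+1) - V(R) \geq c \, \min\!\bigl(V(R),\, R^n - V(R)\bigr)^{(n-1)/n},
\end{equation*}
which, once triggered by a seed of size $\varepsilon r^n$, iterates to give linear-in-$R^n$ growth.

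The derivation would proceed in two steps. First, using minimality against a competitor that equals $u$ outside $B_R$ and transitions to $-1$ on a thin annulus built from the one-dimensional heteroclinic profile, I would obtain an upper bound for $J(u,B_R)$. In the degenerate range $m > p$, the transition profile decays only polynomially, like $(1-|u|) \sim \mathrm{dist}^{-p/(m-p)}$, so the annular competitor must have a carefully chosen thickness balancing Dirichlet and potential costs; the resulting bound is of the form $J(u,B_R) \leq C R^{n-1}$ up to corrections controlled by the profile. Second, by the monotonicity condition (C), replacing $u$ by $-1$ on $\{u \geq 0\} \cap B_R$ yields a valid competitor whose energy saving is bounded below by a surface-type integral across $\partial^{*}\{u \geq 0\} \cap B_R$. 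Combining this with the relative isoperimetric inequality for the level set produces the discrete differential inequality above, and hence the desired lower bound on $V(R)$.

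The principal obstacle is the degeneracy of $W \sim (1-\tau^2)^m$ for $m > p$: unlike the classical $p=m=2$ case, $u$ is not forced to stay close to $\pm 1$ on definite-width transition zones, so ``replacement by $-1$'' competitors are far from neutral in cost. The specific restriction $1 < p < n/(n-1)$ is what makes the polynomial-decay profile compatible with an isoperimetric argument in dimension $n$, and it is what allows one to dispense with the stronger pointwise constraint $pm/(m-p) > n$ of \cite{DFV18}. The seed-density hypothesis at scale $r \geq r_0$ is crucial here: it ensures that the intrinsically wide transition zone of the $m > p$ regime is already absorbed inside $B_r$, so that the comparison and iteration at subsequent scales can be carried out with constants independent of the particular minimizer.
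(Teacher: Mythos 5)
You should first note that the paper does not prove this statement at all: Lemma~\ref{lem. SZ25} is imported verbatim (in simplified form) from Theorem 1.1 of \cite{SZ25}, so a blind proof here amounts to reproving the main theorem of that paper, and your proposal does not do that. What you give is a plan whose core is exactly the Caffarelli--C\'ordoba discrete isoperimetric iteration of \cite{CC95} as adapted in \cite{DFV18}; but in the degenerate regime $m>p$ that scheme is precisely what forces the restriction $\frac{pm}{m-p}>n$, and the whole content of the lemma is to remove that restriction by exploiting the seed hypothesis \eqref{eq. additional assumption} at a large initial scale $r\geq r_{0}(\varepsilon)$. Your only account of how the seed enters is the assertion that the wide transition zone is ``already absorbed inside $B_{r}$''; that is not a mechanism. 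Nothing in your two steps shows how the density at scale $r$ feeds quantitatively into the one-scale comparison so that the constant $c$ in your inequality $V(R+1)-V(R)\geq c\,\min(V(R),R^{n}-V(R))^{(n-1)/n}$ does not degenerate as $R$ grows; without that, the iteration either reproduces \cite{DFV18} (with the unwanted hypothesis $\frac{pm}{m-p}>n$) or simply does not close. This is the missing idea, and it is the entire difficulty.

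There are also concrete errors in the ingredients you invoke. First, you use the monotonicity assumption (C) and the restriction $1<p<\frac{n}{n-1}$, but neither is a hypothesis of this lemma: it concerns minimizers of \eqref{eq. DFV18 assumption} only, and in the present paper the restriction on $p$ is used solely in the proof of Lemma~\ref{lem. key lemma 1} (to get $n-1-\frac{n}{p}<0$), not here; attributing the validity of Lemma~\ref{lem. SZ25} to $p<\frac{n}{n-1}$ misdiagnoses where that condition is actually needed. Second, ``replacing $u$ by $-1$ on $\{u\geq0\}\cap B_{R}$'' is not an admissible competitor: it is discontinuous across the boundary of that set, hence not in $W^{1,p}$, so the claimed lower bound on the energy saving by a surface-type integral over $\partial^{*}\{u\geq0\}\cap B_{R}$ needs a genuinely different construction (truncation against a continuous barrier together with the coarea formula), and it is exactly at that point that the flatness of the potential $|\tau+1|^{m}$ near the well $-1$ destroys the classical balance between perimeter and potential. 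Finally, the energy upper bound $J(u,B_{R})\leq CR^{n-1}$ you propose is the easy half (compare Lemma~\ref{lem. energy estimate}) and does not by itself interact with the seed assumption. As it stands, the proposal has a genuine gap and would not yield the lemma.
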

As an application, the authors proved the density estimate for a class of degenerate Allen-Cahn equations with further regularity assumptions on $F$ and $W$. Their strategy is to translate the origin to a specific point $x^{*}$ where $u(x^{*})$ is sufficiently close to $1$, and to verify the condition \eqref{eq. additional assumption} there. Recently, in \cite{DFGV25a,DFGV25b}, Dipierro-Farina-Giacomin-Valdinoci have obtained a similar result in the nonlocal setting.

In this paper, the main result is the following density estimate for a class of degenerate Allen-Cahn equations, whose Ginzburg-Landau energy has little regularity.
\begin{theorem}\label{thm. main}
    Assume that $u:\mathbb{R}^{n} \to[-1,1]$ is a minimizer of the energy \eqref{eq. GL energy} in $\mathbb{R}^{n}$, such that $F(\vec{\xi},\tau,x)$ and $W(\tau,x)$ satisfy assumptions $\mathrm{(A)(B)(C)}$. If $u(0)=0$, then there exist some universal constants $\delta,R_{0}>0$, such that for every $R\geq R_{0}$, we have
    \begin{equation*}
        \Big|B_{R}\cap\{u\geq0\}\Big|\geq\delta R^{n}\mbox{ and }\Big|B_{R}\cap\{u\leq0\}\Big|\geq\delta R^{n}.
    \end{equation*}
\end{theorem}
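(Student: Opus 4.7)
The plan is to reduce Theorem~\ref{thm. main} to Lemma~\ref{lem. SZ25} by verifying its initial density hypothesis at a universal scale. The problem enjoys a $u \mapsto -u$ symmetry: setting $\tilde W(\tau,x) := W(-\tau,x)$ and $\tilde F(\vec\xi,\tau,x) := F(-\vec\xi,-\tau,x)$ preserves assumptions (A), (B), and (C), so $-u$ minimizes the corresponding modified energy and $\{u \le 0\} = \{-u \ge 0\}$. Thus I only need to establish
\begin{equation*}
\bigl|\{u \ge 0\} \cap B_r\bigr| \ge \varepsilon r^n \quad \text{for some universal } \varepsilon > 0 \text{ and some } r \ge r_0,
\end{equation*}
and then apply Lemma~\ref{lem. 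SZ25} to both $u$ and $-u$ to obtain both density bounds.

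To verify this initial density I would argue by contradiction. First, comparing $u$ on $B_r$ with the competitor equal to $-1$ on $B_{r-1}$ and interpolating linearly to $u$ on the unit-thickness annulus yields the standard perimeter-type bound $J(u, B_r) \le C r^{n-1}$, which by assumption (B) gives the transition estimate $\bigl|\{|u| \le 1/2\} \cap B_r\bigr| \le C r^{n-1}$. Assuming toward contradiction that $\bigl|\{u \ge 0\} \cap B_r\bigr| < \varepsilon r^n$ for every $r \ge r_0$, the function $u$ must then lie in $[-1,-1/2]$ on all but a small fraction of $B_r$. With $\phi \in C_c^\infty(B_r)$ a cutoff equal to $1$ on $B_{r/2}$ and satisfying $|\nabla \phi| \le C/r$, and with a small parameter $\eta > 0$, the natural ``push-down'' competitor to consider is
\begin{equation*}
v(x) := \max\bigl\{u(x) - \eta \phi(x),\, -1\bigr\}.
\end{equation*}
By the monotonicity of $W$ in assumption (C), this modification decreases $W(u)$ pointwise on $\{u \le 0\}$; any $W$-loss on the small set $\{u > 0\} \cap B_r$ is bounded by $C \eta \varepsilon r^n$, while by (A) the $p$-Dirichlet cost is at most $C \eta^p r^{n-p}$.

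The heart of the argument will be extracting a quantitatively large $W$-gain from the condition $u(0)=0$: using the precise (quasi-continuous) representative of $u$, available because $p > 1$, together with the perimeter-type transition bound above, I would localize a ``defect'' neighborhood of the origin on which $W(u)$ is bounded below, producing a $W$-gain of perimeter order $r^{n-1}$. Balancing, the energy difference should satisfy schematically
\begin{equation*}
J(u, B_r) - J(v, B_r) \;\ge\; c\, r^{n-1} - C \eta^p r^{n-p} - C \eta \varepsilon r^n,
\end{equation*}
and the restriction $1 < p < n/(n-1)$ (equivalently $p' > n$) is exactly what is needed to ensure that the cost $r^{n-p}$ is strictly subdominant to the perimeter gain $r^{n-1}$ in the capacity-type cutoff estimates. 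Choosing first $\eta$ small, then $\varepsilon$ small and $r$ large (all universal), the right-hand side becomes strictly positive, contradicting minimality. The main obstacle will be this quantitative linking step: since the $p$-capacity of a point vanishes for $p < n$, a purely pointwise argument fails, and I will need to combine the full strength of (C) with the perimeter-like regime $p < n/(n-1)$ to convert $u(0) = 0$ into a $W$-gain that survives the sub-leading contributions.
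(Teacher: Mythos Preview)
Your reduction via the $u\mapsto -u$ symmetry and the appeal to Lemma~\ref{lem. SZ25} are fine, as is the perimeter bound $J(u,B_r)\le Cr^{n-1}$. But the core comparison argument has genuine gaps under the rough hypotheses of this paper. The claimed Dirichlet cost $C\eta^p r^{n-p}$ presupposes an estimate of the form $F(\nabla v,v,x)-F(\nabla u,u,x)\lesssim|\nabla v-\nabla u|^p$, which is unavailable: assumption~(A) gives only the two-sided bound $\lambda^{-1}|\vec\xi|^p\le F\le\lambda|\vec\xi|^p$, with no convexity or smoothness in $\vec\xi$ and no continuity in $\tau$, so the best crude bound is $\lambda|\nabla v|^p-\lambda^{-1}|\nabla u|^p$, whose integral already contains a term $C\int_{B_r}|\nabla u|^p\le Cr^{n-1}$ of the same order as the gain you hope to extract. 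Likewise the $W$-loss bound $C\eta\varepsilon r^n$ requires Lipschitz continuity of $W(\cdot,x)$, which (B) does not give, and the $W$-gain from pushing down is unquantifiable since monotonicity (C) alone yields no lower bound on $W(u)-W(u-\eta)$. You yourself flag converting $u(0)=0$ into a perimeter-order $W$-gain as the main obstacle and do not resolve it. Finally, your explanation of the role of $p<\tfrac{n}{n-1}$ is off: $r^{n-p}\ll r^{n-1}$ holds for every $p>1$, so that comparison does not use the hypothesis.

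The paper's route avoids all first-variation reasoning. It runs a De~Giorgi iteration (Lemma~\ref{lem. key lemma 1}) with truncation competitors $\phi_k$ at levels $t_k\nearrow t_\infty<0$: minimality plus $|\vec\xi-\vec\eta|^p\le 2^{p-1}(|\vec\xi|^p+|\vec\eta|^p)$ and the Sobolev inequality give an inductive decay for $|A_k|$ with $A_k=\{u\ge t_k\}\cap B_{r_k}$. The restriction $p<\tfrac{n}{n-1}$ enters precisely through $n-1-\tfrac{n}{p}<0$, which makes the normalized seed $\beta_0=R^{-n/p}|A_0|\le C(h)R^{\,n-1-n/p}$ small for $R$ large and forces $\beta_k\to 0$, contradicting $u(0)=0$ unless $\max_{B_\rho}u\ge 1-h$ for a universal $\rho(h)$. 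A separate sliding-barrier argument (Lemma~\ref{lem. key lemma 2}) then turns $u(x^*)\ge 1-h$ into a universal density lower bound for $\{u\ge 0\}$ in a ball centered at $x^*$, after which Lemma~\ref{lem. SZ25} finishes. Both competitor arguments use only the crude bounds in (A), (B), (C) and never differentiate $F$ or $W$; your push-down scheme, by contrast, implicitly relies on smoothness that the theorem deliberately dispenses with.
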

Similar to \cite{SZ25}, the strategy in proving Theorem~\ref{thm. main} is to translate the origin to some point $x^{*}$ and to verify the assumption \eqref{eq. additional assumption} in Lemma~\ref{lem. SZ25}. In the first step, we prove that $\ds\max_{B_{R}}u=u(x^{*})$ is sufficiently close to $1$ for a uniform radius $R$. In the second step, we prove that the density of the positive set is large in a ball centered at $x^{*}$, thus verifying the condition \eqref{eq. additional assumption}. The two key steps mentioned above are derived via variants of the weak Harnack principles.

\section{Proof of Theorem~\ref{thm. main}}
As was mentioned in the Introduction, it suffices to verify that the assumption \eqref{eq. additional assumption} is satisfied in a fixed ball close to the origin. As a preliminary lemma, we prove the energy estimate of a minimizer: 
\begin{lemma}\label{lem. energy estimate}
    Let $u$ be a minimizer to \eqref{eq. GL energy} satisfying the assumptions $\mathrm{(A)(B)(C)}$. Then there exists some universal constant $C$, such that $J(u,B_{R})\leq C R^{n-1}$ for all $R\geq1$.
\end{lemma}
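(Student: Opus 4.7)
The plan is to compare $u$ on $B_R$ with an explicit competitor $v$ that equals $+1$ (a minimum of $W$) on a large interior sub-ball and transitions back to $u$ within a unit-thickness shell near $\partial B_R$. This is the standard variational strategy for phase-transition energies.

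Concretely, let $\eta\in C^\infty_c(\mathbb{R}^n)$ be a radial cutoff with $\eta\equiv 1$ on $B_{R-1}$, $\eta\equiv 0$ outside $B_R$, and $|\nabla\eta|\le 2$, and set
\[
v(x):=u(x)+\eta(x)\bigl(1-u(x)\bigr).
\]
Since $u\in[-1,1]$ and $\eta\in[0,1]$, we have $v\in[u,1]\subseteq[-1,1]$; moreover $v\equiv 1$ on $B_{R-1}$ and $v=u$ outside $B_R$, so $v$ is an admissible competitor on $\Omega'=B_R$. By $\mathrm{(A)(B)}$ together with the nonnegativity of $F$ and $W$, both $F(0,1,x)$ and $W(1,x)$ vanish, so $J(v,B_{R-1})=0$. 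On the annulus $A:=B_R\setminus B_{R-1}$, expanding $\nabla v=(1-\eta)\nabla u+(1-u)\nabla\eta$ and applying Young's inequality $(a+b)^p\le(1+\varepsilon)a^p+C_\varepsilon b^p$, one finds $|\nabla v|^p\le(1+\varepsilon)|\nabla u|^p+C_\varepsilon$. Together with $|v|\le 1$ and $\mathrm{(A)(B)}$, this yields
\[
J(v,B_R)\le K\,J(u,A)+C\,R^{n-1},\qquad K:=\lambda^2(1+\varepsilon).
\]

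Minimality now gives $h(R):=J(u,B_R)\le J(v,B_R)$. Writing $J(u,A)=h(R)-h(R-1)$ and rearranging produces the hole-filling relation
\[
h(R-1)\le \theta\,h(R)+C R^{n-1},\qquad \theta:=1-\tfrac{1}{K}\in(0,1),
\]
equivalently $h(R)\le \theta\,h(R+1)+C(R+1)^{n-1}$. Iterating $k$ times yields $h(R)\le \theta^k h(R+k)+C R^{n-1}$ for every $k\ge 1$, and the desired estimate $h(R)\le CR^{n-1}$ follows once we pass to $k\to\infty$ and kill the leading term.

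The main obstacle is therefore the a priori sub-exponential growth of $h$. This can be established in parallel using the same competitor method on nested balls $B_r\subset B_R$, taking $v=u+\eta(c-u)$ with $c\in\{-1,1\}$ and $\eta$ supported on the shell of width $R-r$ (with $|\nabla\eta|\le 2/(R-r)$). The resulting Caccioppoli-type inequality, iterated on dyadic radii $R,2R,4R,\ldots$ and exploiting that $|u|\le 1$ keeps the boundary $W$-contribution bounded by $CR^n$, gives $\int_{B_R}|\nabla u|^p\le CR^n$, hence $h(R)\le CR^n$. This polynomial bound is easily beaten by $\theta^k\to 0$ as $k\to\infty$, closing the iteration and yielding the lemma.
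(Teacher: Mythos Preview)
Your hole-filling relation $h(R-1)\le\theta h(R)+CR^{n-1}$ is derived correctly, and the iterated inequality $h(R)\le\theta^{k}h(R+k)+CR^{n-1}$ is fine. The gap is in the final paragraph, where you claim the a priori bound $h(R)\le CR^{n}$ by ``iterating on dyadic radii $R,2R,4R,\ldots$''. Running your hole-filling on a shell of width comparable to $R$ gives $h(R)\le\theta\,h(2R)+CR^{n}$; iterating outward yields
\[
h(R)\le\theta^{k}h(2^{k}R)+CR^{n}\sum_{j=0}^{k-1}(\theta\cdot 2^{n})^{j},
\]
and the sum diverges unless $\theta\cdot 2^{n}<1$, i.e.\ $\lambda^{2}(1+\varepsilon)<\tfrac{2^{n}}{2^{n}-1}$. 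Since $\lambda$ is an arbitrary structural constant, this is not guaranteed, and the argument as written does not close. (One can repair it: with a cutoff of variable width $s-r$ the same computation gives $h(r)\le\theta h(s)+C(s-r)^{1-p}s^{n-1}+C(s-r)s^{n-1}$ for all $r<s$, and the standard Giaquinta--Giusti iteration lemma on the bounded interval $[R,R+1]$ --- which only needs $h$ to be finite there, automatic since $u\in W^{1,p}_{\mathrm{loc}}$ --- already yields $h(R)\le CR^{n-1}$ directly, making the outward iteration superfluous.)

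The paper avoids all of this with a one-line argument. Instead of a competitor that blends $u$ with a constant via a cutoff (which forces $\nabla u$ onto the right-hand side and creates the hole-filling loop), it takes a \emph{fixed} radial barrier $v(x)=\mathrm{med}(-1,1,|x|-R-1)$, sets $\Omega=\{u\ge v\}$, and compares $u$ with $\min(u,v)$ on $\Omega$. Since $v\equiv-1$ on $B_{R}$ one has $B_{R}\subseteq\Omega\subseteq B_{R+2}$, and minimality gives $J(u,B_{R})\le J(u,\Omega)\le J(v,\Omega)\le J(v,B_{R+2})\le CR^{n-1}$ in a single step. The point you are missing is that a truncation competitor $\min(u,v)$ has gradient equal to $\nabla v$ (not $\nabla u$) on the contact set, so no iteration is needed at all.
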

\begin{remark}
    In fact, such an energy estimate was already proven in \cite{DFV18}, and it holds true both not only when the Ginzburg-Landau energy \eqref{eq. GL energy} is degenerate ($m>p$), but also when it is non-degenerate ($m\leq p$).
\end{remark}
\begin{proof}[Proof of Lemma~\ref{lem. energy estimate}]
    Let $v(x)=v(|x|)$, such that ("$\mathrm{med}$" stands for the median of the three quantities):
    \begin{equation*}
        v(r)=\mathrm{med}(-1,1,r-R-1).
    \end{equation*}
    Let $\Omega=\{u\geq v\}$, then we have $B_{R}\subseteq\Omega$ and $\overline{\Omega}\subseteq B_{R+2}$. By the minimality of $u$, we have
    \begin{equation*}
        J(u,B_{R})\leq J(u,\Omega)\leq J(v,\Omega)\leq J(v,B_{R+2})\leq C R^{n-1}.
    \end{equation*}
    Here, in the last step of the inequality above, we have used the fact that the infinitesimal energy
    \begin{equation*}
        F(\nabla v,v,x)+W(v,x)\leq C
    \end{equation*}
    for some uniform constant $C$, and that it is supported only in the annulus $B_{R+2}\setminus B_{R}$.
\end{proof}
Now, let us prove Lemma~\ref{lem. key lemma 1} and Lemma~\ref{lem. key lemma 2}, which are the key steps to proving Theorem~\ref{thm. main}. During the proof, $C$ denotes some universal constant, which might change from line to line.

In the first key step, we show that $u$ is close to $1$ at some $x^{*}\in B_{\rho}$ for some sufficiently large $\rho$.
\begin{lemma}\label{lem. key lemma 1}
    Let $u$ with $u(0)=0$ be a minimizer to \eqref{eq. GL energy} satisfying the assumptions $\mathrm{(A)(B)(C)}$. Given any $h<1$, then there exists some $\rho=\rho(h)$ such that $\ds\max_{B_{\rho}}u\geq1-h$.
\end{lemma}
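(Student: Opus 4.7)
I would argue by contradiction. Suppose the conclusion fails for some $h\in(0,1)$: then $\max_{B_\rho}u<1-h$ for every $\rho>0$, so $u\le 1-h$ a.e.\ on $\mathbb{R}^n$. My plan is, first, to exploit the minimality of $u$ together with assumption~(C) and the hypothesis $u(0)=0$ to conclude (via a weak-Harnack-type argument) that $u\ge 0$ a.e.\ on $\mathbb{R}^n$; and second, to combine $0\le u\le 1-h$ with assumption~(B) to contradict Lemma~\ref{lem. energy estimate}.

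The heart of the argument will be the first step. Set $u_-:=\max(-u,0)\ge 0$. Using the monotonicity of $W(\cdot,x)$ on $[-1,0]$ given by~(C) together with one-sided minimality---testing against competitors of the form $u+t\phi$ for $\phi\ge 0$ compactly supported in $\{u<0\}$ and $t>0$ small---I will show that $u_-$ behaves like a nonnegative $p$-supersolution in a variational sense. A De~Giorgi/Moser-type iteration, based only on the $p$-growth bounds from~(A) and working purely at the level of the functional $J$, then yields a weak Harnack inequality of the form
\[
\Bigl(\frac{1}{|B_r|}\int_{B_r}u_-^{q}\,dx\Bigr)^{1/q}\le C\,\mathrm{ess\,inf}_{B_r}u_-
\]
for some $q=q(n,p)>0$. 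Because $u(0)=0$ forces $u_-(0)=0$, and because minimizers of $p$-growth functionals with bounded rough coefficients are H\"older continuous (so that $\mathrm{ess\,inf}_{B_r}u_-=0$), we obtain $u_-\equiv 0$ on $B_r$ for every $r>0$; that is, $u\ge 0$ a.e.\ on $\mathbb{R}^n$.

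The second step is then immediate: with $0\le u\le 1-h$ a.e., assumption~(B) yields $W(u(x),x)\ge\lambda^{-1}\bigl(h(2-h)\bigr)^m=:c(h)>0$ a.e., so that $J(u,B_R)\ge c(h)|B_R|=c'(h)R^n$, contradicting $J(u,B_R)\le CR^{n-1}$ from Lemma~\ref{lem. energy estimate} as soon as $R$ is large enough. The principal obstacle is the weak Harnack step itself: the absence of any regularity on $F$ and $W$ forces us to bypass the Euler-Lagrange equation and to argue entirely through competitor-based variational inequalities, using~(C) as a substitute for sign information on $W'$. This is exactly the sort of \emph{variant of the weak Harnack principle} mentioned in the introduction, and its derivation---adapting the classical De~Giorgi/Moser scheme to the present degenerate, rough-coefficient setting---is the technical core of the proof.
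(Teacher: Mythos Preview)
Your Step~1 is where the argument breaks. Notice first that, as written, it never uses the hypothesis $u\le 1-h$; if it were valid it would show that \emph{every} minimizer with $u(0)=0$ satisfies $u\ge 0$ on all of $\mathbb{R}^n$. This is already false in the model case $F=|\xi|^p$, $W=(1-\tau^2)^m$: the one-dimensional heteroclinic profile, viewed as a function on $\mathbb{R}^n$, is a global minimizer with $u(0)=0$ that is strictly negative on a half-space. The specific gap lies in the supersolution claim for $u_-$. On the open set $\{u<0\}$ one can indeed combine minimality with~(C) to obtain, at the variational level, a one-sided inequality making $-u$ behave like a $p$-supersolution there. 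But the truncation $u_-=\max(-u,0)$ does \emph{not} inherit this property across the contact set $\{u=0\}$: in the one-dimensional picture, if $u$ crosses zero transversally then $u_-$ has a convex corner, contributing a positive Dirac mass to $(u_-)''$ --- the wrong sign. Equivalently, the maximum of two supersolutions is in general not a supersolution. Since $u(0)=0$, any ball on which you would apply the weak Harnack inequality must meet $\{u=0\}$, which is exactly where the supersolution property fails.

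It is also telling that your outline nowhere uses the restriction $1<p<\frac{n}{n-1}$, which is essential in the paper. The paper's argument in fact runs in the \emph{opposite} direction from your Step~1: assuming $u\le 1-h$ in $B_R$, it builds explicit barriers $\phi_k$ stepping from $1$ down to a fixed level $t_\infty\in(-1,0)$, compares $u$ with $\min(u,\phi_k)$, and runs a De~Giorgi iteration on the measures $|A_k|=|B_{r_k}\cap\{u\ge t_k\}|$. The energy estimate of Lemma~\ref{lem. energy estimate} gives $|A_0|\le C(h)R^{n-1}$, and the condition $p<\frac{n}{n-1}$ (equivalently $n-1-\frac{n}{p}<0$) is precisely what makes the normalized seed $R^{-n/p}|A_0|$ small for large $R$, so that the iteration drives $|A_k|\to 0$ and forces $u\le t_\infty<0$ a.e.\ in $B_{R/2}$, contradicting $u(0)=0$. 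Your Step~2 would be a clean finish \emph{if} one already knew $|\{0\le u\le 1-h\}\cap B_R|\gtrsim R^n$; but producing that lower bound is essentially the content of the lemma, not a free byproduct of a weak Harnack inequality for $u_-$.
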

\begin{proof}
    Let us assume that $\ds\max_{B_{R}}u\leq1-h$ for some radius $R=2^{L}$ (without loss of generality, assume that $L\geq2$ is an integer), then it suffices to find an upper bound of $L$ depending on $h$. For the given $h$, we can choose a fixed $t_{\infty}=t_{\infty}(h)$, such that:
    \begin{equation*}
        -1<t_{\infty}<0,\quad W(t_{\infty})\leq W(1-h).
    \end{equation*}
    For all $k\geq0$, we make the following notations:
    \begin{equation*}
        t_{k}=(1-2^{-k-1})t_{\infty}-2^{-k-1},\quad r_{k}=\frac{1+2^{-k}}{2}R,\quad A_{k}=B_{r_{k}}\cap\{u\geq t_{k}\}.
    \end{equation*}
    
    We consider a sequence of competitors $\phi_{k}$. When $k\geq L-1$, we let
    \begin{equation*}
        \phi_{k}(x)=\mathrm{med}\Big(t_{k},1,1+\frac{2^{k+2}}{R}(1-t_{k})(|x|-r_{k})\Big).
    \end{equation*}
    It follows that $\phi_{k}\equiv1$ outside $B_{r_{k}}$ and $\phi_{k}\equiv t_{k}$ inside $B_{r_{k+1}}$. Besides, as $|\nabla\phi_{k}|\leq C\frac{2^{k}}{R}$ in the annulus $B_{r_{k}}\setminus B_{r_{k+1}}$, we use the assumptions (A)(B) on $F(\cdot,\cdot,\cdot)$ and $W(\cdot,\cdot)$, and have that
    \begin{equation}\label{eq. F+W estimate when k>=L-1}
        F(\nabla\phi_{k},\phi_{k},x)+W(\phi_{k},x)\leq\lambda\Big(C\frac{2^{k}}{R}\Big)^{p}+\lambda\leq C\frac{2^{kp}}{R^{p}}\quad\mbox{in the annulus }B_{r_{k}}\setminus B_{r_{k+1}},
    \end{equation}
    where we have used $\frac{2^{k}}{R}=\frac{2^{k}}{2^{L}}\geq\frac{1}{2}$ in the last step of the inequality above.
    
    When $0\leq k\leq L-2$, we choose some $N_{k}\in(r_{k+1},r_{k}]\cap\mathbb{Z}$ (the choice of $N_{k}$ will be specified later), or equivalently:
    \begin{equation*}
        N_{k}\in\{2^{L-1}+2^{L-k-2}+1,2^{L-1}+2^{L-k-2}+2,\cdots,2^{L-1}+2^{L-k-1-2}-1,2^{L-1}+2^{L-k-1-2}\}.
    \end{equation*}
    With such a choice of $N_{k}$, we set
    \begin{equation*}
        \phi_{k}(x)=\mathrm{med}\Big(t_{k},1,1+(1-t_{k})(|x|-N_{k})\Big).
    \end{equation*}
    We then have $\phi_{k}\equiv1$ outside $B_{N_{k}}$, $\phi_{k}\equiv t_{k}$ inside $B_{N_{k}-1}$. Besides, as $|\nabla\phi_{k}|\leq2$ in the annulus $B_{r_{k}}\setminus B_{r_{k+1}}$, we use the assumptions (A)(B) on $F(\cdot,\cdot,\cdot)$ and $W(\cdot,\cdot)$, and have that
    \begin{equation*}
        F(\nabla\phi_{k},\phi_{k},x)+W(\phi_{k},x)\leq\lambda\cdot2^{p}+\lambda\leq C\quad\mbox{in the annulus }B_{N_{k}}\setminus B_{N_{k}-1}.
    \end{equation*}

    Denote $\Omega_{k}=\{u>\phi_{k}\}$ for $k\geq0$, then since $u\leq1-h$ in $B_{R}$, we see $\overline{\Omega_{k}}\subseteq B_{r_{k}}$ for $k\geq L-1$ and $\overline{\Omega_{k}}\subseteq B_{N_{k}}\subseteq B_{r_{k}}$ for $0\leq k\leq L-2$. It then follows from the minimality of $u$ that:
    \begin{align*}
        \lambda^{-1}\int_{\Omega_{k}}|\nabla u|^{p}dx\leq&J(u,\Omega_{k})-\int_{\Omega_{k}}W(u,x)dx\leq J(\phi_{k},\Omega)-\int_{\Omega_{k}}W(u,x)dx\\
        \leq&\lambda\int_{\Omega_{k}}|\nabla\phi_{k}|^{p}dx+\int_{\Omega_{k}}\Big\{W(\phi_{k},x)-W(u,x)\Big\}dx.
    \end{align*}
    Since $2^{1-p}|\vec{\xi}-\vec{\eta}|^{p}\leq|\vec{\xi}|^{p}+|\vec{\eta}|^{p}$ for any two vectors, we choose $\vec{\xi}=\nabla u$, $\vec{\eta}=\nabla\phi_{k}$, and conclude that
    \begin{equation}\label{eq. u-phi_k W^1,p estimate}
        \int_{\Omega_{k}}|\nabla(u-\phi_{k})|^{p}dx\leq C\int_{\Omega_{k}}|\nabla\phi_{k}|^{p}dx+C\int_{\Omega_{k}}\Big\{W(\phi_{k},x)-W(u,x)\Big\}dx.
    \end{equation}
    
    Since $W(t_{\infty})\leq W(1-h)$ and since $u\leq1-h$ in $B_{R}$, by the assumption (C) of $W(\tau,x)$, we see that $W(\phi_{k},x)\leq W(u,x)$ when $x\in\Omega_{k}$ and $\phi_{k}=t_{k}\leq t_{\infty}$. Besides, we have $\Omega_{k}\subseteq A_{k}$. Then, we have:
    \begin{itemize}
        \item Case 1: If $0\leq k\leq L-2$, then:
        \begin{align*}
            \int_{\Omega_{k}}|\nabla(u-\phi_{k})|^{p}dx\leq&C\int_{(B_{N_{k}}\setminus B_{N_{k}-1})\cap A_{k}}|\nabla\phi_{k}|^{p}dx+C\int_{(B_{N_{k}}\setminus B_{N_{k}-1})\cap A_{k}}W(\phi_{k},x)dx\\
            \leq&C|(B_{N_{k}}\setminus B_{N_{k}-1})\cap A_{k}|.
        \end{align*}
        Moreover, if we choose $N_{k}$ wisely, then we even have the following estimate:
        \begin{equation}\label{eq. small k good choice}
            \int_{\Omega_{k}}|\nabla(u-\phi_{k})|^{p}dx\leq C\frac{2^{k}}{R}|A_{k}|.
        \end{equation}
        In fact, since the width of the annulus $B_{r_{k}}\setminus B_{r_{k+1}}$ equals $2^{L-k-2}=\frac{R}{2^{k+2}}$, and that
        \begin{equation*}
            \sum_{N=r_{k+1}+1}^{r_{k}}|(B_{N}\setminus B_{N-1})\cap A_{k}|=|(B_{r_{k}}\setminus B_{r_{k+1}})\cap A_{k}|\leq|A_{k}|,
        \end{equation*}
        we can then choose some $N_{k}\in(r_{k+1},r_{k}]\cap\mathbb{Z}$, such that $|(B_{N_{k}}\setminus B_{N_{k}-1})\cap A_{k}|\leq\frac{2^{k+2}}{R}|A_{k}|$.
        \item Case 2: If $k\geq L-1$, using the estimate \eqref{eq. F+W estimate when k>=L-1}, we obtain from \eqref{eq. u-phi_k W^1,p estimate} that:
        \begin{equation}\label{eq. large k good estimate}
            \int_{\Omega_{k}}|\nabla(u-\phi_{k})|^{p}dx\leq C\int_{\Omega_{k}}\Big\{|\nabla\phi_{k}|^{p}+W(\phi_{k},x)\Big\}dx\leq C\frac{2^{kp}}{R^{p}}|A_{k}|.
        \end{equation}
    \end{itemize}
    
    Now, we first apply the Sobolev inequality, and then apply the H\"older inequality, and obtain that:
    \begin{equation*}
        \int_{\Omega_{k}}|\nabla(u-\phi_{k})|^{p}dx\geq C\Big\{\int_{\Omega_{k}}|u-\phi_{k}|^{\frac{np}{n-p}}dx\Big\}^\frac{n-p}{n}\geq C|\Omega_{k}|^{-\frac{p}{n}}\int_{\Omega_{k}}|u-\phi_{k}|^{p}dx.
    \end{equation*}
    Recall that $\Omega_{k}\subseteq A_{k}$ and that $\phi_{k}\equiv t_{k}$ in $B_{r_{k+1}}$, then we conclude that
    \begin{equation}\label{eq. Sobolev LHS}
        \int_{\Omega_{k}}|\nabla(u-\phi_{k})|^{p}dx\geq C|A_{k}|^{-\frac{p}{n}}\int_{\Omega_{k}\cap B_{r_{k+1}}\cap\{u\geq t_{k+1}\}}|u-t_{k}|^{p}dx\geq C|A_{k}|^{-\frac{p}{n}}\frac{|A_{k+1}|}{2^{kp}}.
    \end{equation}
    We combine \eqref{eq. small k good choice}, \eqref{eq. large k good estimate}, and \eqref{eq. Sobolev LHS} and get
    \begin{align*}
        |A_{k+1}|\leq&C\frac{2^{k(1+p)}}{R}|A_{k}|^{1+\frac{p}{n}},\quad\mbox{if }0\leq k\leq L-2,\\
        |A_{k+1}|\leq&C\frac{4^{kp}}{R^{p}}|A_{k}|^{1+\frac{p}{n}},\quad\mbox{if }k\geq L-1.
    \end{align*}
    Consequently, by using the observation that $\frac{4^{kp}}{R^{p}}\geq2^{1-p}\cdot\frac{2^{k(1+p)}}{R}$ for $k\geq L-1$, we have the following inductive inequality for $|A_{k}|$'s:
    \begin{equation}\label{eq. weak harnack iteration 1}
        |A_{k+1}|\leq C\frac{2^{k(1+p)}}{R}|A_{k}|^{1+\frac{p}{n}},\quad\mbox{for all }k\geq0.
    \end{equation}

    Now we divide $R^{\frac{n}{p}}$ on both sides of \eqref{eq. weak harnack iteration 1}, and denote $\beta_{k}=R^{-\frac{n}{p}}|A_{k}|$, then we have
    \begin{equation}\label{eq. weak harnack iteration 2}
        \beta_{k+1}\leq C2^{k(1+p)}\beta_{k}^{1+\frac{p}{n}}.
    \end{equation}
    Notice that for all $x\in A_{0}\subseteq B_{R}$, $u(x)\geq t_{0}=\frac{t_{\infty}-1}{2}$ and $u(x)\leq1-h$, then by Lemma~\ref{lem. energy estimate}, we have
    \begin{equation*}
        \beta_{0}=R^{-\frac{n}{p}}|A_{0}|\leq R^{-\frac{n}{p}}\frac{1}{\min\{\lambda|\frac{t_{\infty}+1}{2}|^{m},\lambda h^{m}\}}\int_{B_{R}}W(u,x)dx\leq R^{-\frac{n}{p}}\cdot\frac{C R^{n-1}}{c(h)}.
    \end{equation*}
    When $1<p<\frac{n}{n-1}$, we have $n-1-\frac{n}{p}<0$. By choosing $\rho=\rho(h)$ sufficiently large, we see that $\ds\max_{B_{\rho}}u\geq1-h$. In fact, suppose on the contrary that $\ds\max_{B_{R}}u\leq1-h$ for some large $R$. Then, as
    \begin{equation*}
        \lim_{R\to+\infty}\frac{C R^{n-1-\frac{n}{p}}}{c(h)}=0,
    \end{equation*}
    we see that $\beta_{0}$ is sufficiently small. In other words, $\ln{\beta_{0}}$ is a negative number with a sufficiently large absolute value. We take the logarithm on both sides of the inductive inequality \eqref{eq. weak harnack iteration 2}, and have that
    \begin{equation*}
        \ln{\beta_{k+1}}\leq(1+\frac{p}{n})\ln{\beta_{k}}+k\cdot\ln{(2^{1+p})}+\ln{C}.
    \end{equation*}
    As the initial data $\ln{\beta_{0}}$ is a sufficiently large negative number, we can inductively show that
    \begin{equation*}
        \ln{\beta_{k}}\leq(1+\frac{p}{2n})^{k}\cdot\ln{\beta_{0}}.
    \end{equation*}
    As a result, $\ln{\beta_{k}}\to-\infty$ as $k\to\infty$, or equivalently, $\beta_{k}\to0$. In other words, we have that $u\leq t_{\infty}$ almost everywhere in $B_{R/2}$, which contradicts the assumption $u(0)=0$.
\end{proof}

In the second key step, we show that the positive set of $u$ is large near some point $x^{*}\in B_{\rho}$.
\begin{lemma}\label{lem. key lemma 2}
    There exists some universal constant $\sigma>0$ and a function $h=h(R)>0$ for all $R\geq1$, such that the following holds: Assume that $u$ is a minimizer to \eqref{eq. GL energy} satisfying the assumptions $\mathrm{(A)(B)(C)}$ and that $u(0)\geq1-h$, then $\Big|B_{R}\cap\{u\geq0\}\Big|\geq\sigma R^{n}$.
\end{lemma}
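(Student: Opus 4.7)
The plan is to argue by contradiction, running a De Giorgi iteration symmetric to the one in Lemma~\ref{lem. key lemma 1}, from the $+1$ side. Suppose that $|B_R \cap \{u \geq 0\}| < \sigma R^n$ for a small universal $\sigma > 0$ to be determined. I aim to derive $u \leq t_\infty < 1 - h(R)$ almost everywhere on $B_{R/2}$, which contradicts $u(0) \geq 1 - h(R)$.

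I set $t_\infty = 1 - h(R) \in (0,1)$ with $h(R) > 0$ to be chosen, $t_k = t_\infty(1 - 2^{-k})$, $r_k = \tfrac{1+2^{-k}}{2}R$, and $A_k = B_{r_k} \cap \{u \geq t_k\}$, so that $t_0 = 0$, $t_k \uparrow t_\infty$, and $|A_0| < \sigma R^n$. The competitors $\phi_k$ are built as mirror images of those in Lemma~\ref{lem. key lemma 1}: they equal $t_k$ on the inner ball (either $B_{r_{k+1}}$ for $k \geq L-1$, or $B_{N_k - 1}$ with an integer $N_k \in (r_{k+1}, r_k]$ chosen by pigeonhole for $0 \leq k \leq L-2$) and equal $1$ outside. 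Testing minimality against $v_k = \min(u, \phi_k)$ on $\Omega_k = \{u > \phi_k\} \subseteq A_k$ produces
\[
\int_{\Omega_k} |\nabla(u - \phi_k)|^p\,dx \leq C\int_{\Omega_k} |\nabla \phi_k|^p\,dx + C\int_{\Omega_k} \bigl(W(\phi_k, x) - W(u, x)\bigr)\,dx.
\]

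The main obstacle here, and the key difference from Lemma~\ref{lem. key lemma 1}, is that now both $\phi_k$ and $u$ lie on the decreasing branch of $W$ in $[0,1]$, so the potential integrand is nonnegative and assumption~(C) no longer lets us discard it. I bound it pointwise via~(B) by $W(\phi_k, x) \leq \lambda(1-\phi_k^2)^m \leq C(1-t_k)^m$. Combined with the pigeonhole bound on $|A_k \cap (B_{N_k} \setminus B_{N_k-1})|$, Sobolev on $u - \phi_k$, and the lower bound $u - \phi_k \geq t_\infty 2^{-k-1}$ on $A_{k+1}$, this produces
\[
|A_{k+1}| \leq \frac{C}{t_\infty^p}\Bigl[\frac{2^{k(1+p)}}{R} + 2^{kp}(1-t_k)^m\Bigr]|A_k|^{1 + p/n}.
\]
The new summand is absorbed using $m > p$: since $1 - t_k \leq 2^{-k} + h(R)$, one has $2^{kp}(1-t_k)^m \leq C\bigl(2^{k(p-m)} + 2^{kp} h(R)^m\bigr)$, the first piece uniformly bounded and decaying in $k$; choosing $h(R) = R^{-\alpha}$ with $\alpha$ comfortably larger than $p/m$, together with the same two-regime split that Lemma~\ref{lem. key lemma 1} exploits at $k = L-1$, keeps the second piece controlled by the $\tfrac{2^{k(1+p)}}{R}$ term. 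Normalizing $\beta_k = R^{-n/p}|A_k|$ then collapses the recursion to the universal form $\beta_{k+1} \leq C\cdot 2^{k(1+p)} \beta_k^{1 + p/n}$ of \eqref{eq. weak harnack iteration 2}.

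The hard part, where real care is needed, is to certify that $\beta_0$ is below the universal De Giorgi threshold so that the iteration converges. The naive bound $|A_0| \leq \sigma R^n$ only gives $\beta_0 \leq \sigma R^{n(p-1)/p}$, which is not universally small. The plan is to restart the iteration at a raised initial level $t_0' \in (0, t_\infty)$: the contradiction hypothesis still gives $|A_0'| \leq \sigma R^n$, and in addition Lemma~\ref{lem. energy estimate} controls the transition strip via $|B_R \cap \{0 \leq u \leq t_0'\}| \leq C R^{n-1}/(1 - t_0')^m$; combining these bounds together with $p < n/(n-1)$ (equivalently $n - 1 - n/p < 0$) drives $\beta_0' \to 0$ as $R \to \infty$, exactly as at the end of the proof of Lemma~\ref{lem. key lemma 1}. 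The iteration then yields $|A_k| \to 0$, so $u \leq t_\infty = 1 - h(R)$ a.e. on $B_{R/2}$, which is incompatible with $u(0) \geq 1 - h(R)$ once $h(R)$ is taken strictly smaller than $R^{-\alpha}$.
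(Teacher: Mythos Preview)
Your iteration does not close, and the ``restart'' paragraph does not repair it. There are two related failures.

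\textbf{The recursion is not universal.} The potential term $\int_{\Omega_k}\bigl(W(\phi_k,x)-W(u,x)\bigr)\,dx$ lives on all of $\Omega_k$, not only on the thin annulus where $\nabla\phi_k\neq0$; the pigeonhole choice of $N_k$ does nothing for it. On the inner ball $\phi_k\equiv t_k$, so this term is of order $(1-t_k)^m|A_k|$, and your recursion reads correctly as
\[
|A_{k+1}|\le C\,2^{kp}\Bigl[\tfrac{2^k}{R}+(1-t_k)^m\Bigr]|A_k|^{1+p/n}.
\]
After the normalization $\beta_k=R^{-n/p}|A_k|$ this becomes
\[
\beta_{k+1}\le C\bigl[2^{k(1+p)}+R\cdot 2^{kp}(1-t_k)^m\bigr]\beta_k^{1+p/n}.
\]
Your splitting $(1-t_k)^m\le C(2^{-km}+h^m)$ handles the $h^m$ piece once $h\le R^{-1/m}$, but the piece $R\cdot 2^{k(p-m)}$ is of order $R$ for small $k$ (indeed $t_0=0$ gives $1-t_0=1$). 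So the iteration constant is $\sim R$, not universal, for the first $\sim\log R$ steps, and the form \eqref{eq. weak harnack iteration 2} is simply not obtained.

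\textbf{The initial smallness is unavailable.} In Lemma~\ref{lem. key lemma 1} the energy estimate yields $|A_0|\le C(h)R^{n-1}$ because the standing hypothesis $u\le 1-h$ in $B_R$ forces $W(u,x)\ge c(h)>0$ on $A_0$. In your setting there is no such ceiling: on $A_0'=B_{r_0}\cap\{u\ge t_0'\}$ the function $u$ may approach $1$, where $W$ vanishes, so the energy bound gives no control of $|A_0'|$. Your ``combination'' uses the energy to bound the strip $\{0\le u\le t_0'\}$, but that is the complement of $A_0'$ inside $\{u\ge0\}$; subtracting it from $|\{u\ge0\}|<\sigma R^n$ still leaves $|A_0'|<\sigma R^n$, which is exactly the bound you already called insufficient. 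There is no mechanism in your outline that produces $\beta_0'\to0$.

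The paper proceeds by an entirely different mechanism: it chooses $h=h(R)$ so that $W(\tau,x)\le h^pR^{-p}$ for $\tau\in[1-2h,1]$ (this is where $m>p$ enters), builds a one-parameter family of competitors $\phi_a$, $a\in[h,2h]$, that stay in this narrow range, and derives a differential inequality for $V_a=\int_{\{u>\phi_a\}}(u-\phi_a)$ in the parameter $a$. Integrating from $a=h$ (where $V_h>0$ by $u(0)\ge1-h$) to $a=2h$ gives $V_{2h}\ge cR^nh$, hence $|\{u>\phi_{2h}\}|\ge cR^n$. The argument grows the positive set outward from the single point $x=0$; it never needs a small initial super-level measure, which is precisely what your De Giorgi scheme cannot supply.
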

\begin{proof}
    As $W(\tau,x)\leq\lambda(1-\tau^{2})^{m}$ with $m>p$, we choose $h=h(R)=\min\Big\{(2^{m}\lambda R^{p})^{-\frac{1}{m-p}},\frac{1}{2}\Big\}$, then
    \begin{equation*}
        W(\tau,x)\leq h^{p}R^{-p},\quad\mbox{for all }1-2h\leq\tau\leq1.
    \end{equation*}
    For each $h\leq a\leq 2h$, we consider a competitor
    \begin{equation*}
        \phi_{a}(x)=\min\Big\{(1-a)+\frac{4h^{2}|x|^{2}}{a R^{2}},1\Big\}.
    \end{equation*}
    We can easily verify that $\{\phi_{a}(x)<1\}\subseteq B_{R}$ and $|\nabla\phi_{a}|\leq\frac{8h}{R}$ everywhere. Moreover, $W(\phi_{a},x)\leq h^{p}R^{-p}$ everywhere since $\phi_{a}\geq1-2h$. Now let us denote
    \begin{equation*}
        \Omega_{a}=\{u>\phi_{a}\},\quad V_{a}=\int_{\Omega_{a}}(u-\phi_{a})dx.
    \end{equation*}
    We clearly have $\overline{\Omega_{a}}\subseteq B_{R}$. Then, we deduce from the minimality of $u$ that
    \begin{equation*}
        \lambda^{-1}\int_{\Omega_{a}}|\nabla u|^{p}dx\leq J(u,\Omega_{a})\leq J(\phi_{a},\Omega_{a})\leq\lambda\int_{\Omega_{a}}|\nabla\phi_{a}|^{p}dx+\int_{\Omega_{a}}W(\phi_{a},x)dx.
    \end{equation*}
    Since $2^{1-p}|\vec{\xi}-\vec{\eta}|^{p}\leq|\vec{\xi}|^{p}+|\vec{\eta}|^{p}$ for any two vectors, we choose $\vec{\xi}=\nabla u$, $\vec{\eta}=\nabla\phi_{a}$. By applying the H\"older inequality to the function $|\nabla(u-\phi_{a})|$, we conclude that:
    \begin{equation*}
        \Big([u-\phi_{a}]_{W^{1,1}(\Omega_{a})}\Big)^{p}|\Omega_{a}|^{1-p}\leq\int_{\Omega_{a}}|\nabla(u-\phi_{a})|^{p}dx\leq C\int_{\Omega_{a}}\Big\{|\nabla\phi_{a}|^{p}dx+W(\phi_{a},x)\Big\}dx\leq C h^{p}R^{-p}|\Omega_{a}|.
    \end{equation*}
    By the H\"older inequality and the Sobolev inequality, we have
    \begin{equation}\label{eq. Sobolev for VR}
        V_{a}\leq\|u-\phi_{a}\|_{L^{\frac{n}{n-1}}(\Omega_{a})}|\Omega_{a}|^{\frac{1}{n}}\leq[u-\phi_{a}]_{W^{1,1}(\Omega_{a})}|\Omega_{a}|^{\frac{1}{n}}\leq\frac{Ch}{R}|\Omega_{a}|^{1+\frac{1}{n}}.
    \end{equation}
    
    On the other hand, note that $\frac{d}{d\kappa}\Big|_{\kappa=a}\phi_{\kappa}(x)\leq-1$ for any $x\in\Omega_{a}$, we then have
    \begin{equation}\label{eq. ODE 1}
        \frac{d}{d\kappa}\Big|_{\kappa=a}V_{\kappa}=-\int_{\Omega_{a}}\frac{d}{d\kappa}\Big|_{\kappa=a}\phi_{\kappa}(x)dx\geq|\Omega_{a}|\geq c(\frac{R}{h}\cdot V_{a})^{\frac{n}{n+1}},
    \end{equation}
    where we have used \eqref{eq. Sobolev for VR} in the last step. Recall that the assumption $u(0)\geq1-h$ implies that $V_{a}>0$ for all $a>h$, we can divide $V_{a}^{\frac{n}{n+1}}$ on both sides of \eqref{eq. ODE 1} when $a>h$, and obtain the following:
    \begin{equation*}
        \frac{d}{d\kappa}\Big|_{\kappa=a}V_{\kappa}^{\frac{1}{n+1}}\geq c(\frac{R}{h})^{\frac{n}{n+1}},\quad\mbox{for all }h<a\leq2h,
    \end{equation*}
    which implies that $V_{2h}\geq c R^{n}h$. Notice that $u-\phi_{2h}\leq2h$ and $u\geq0$ in $\Omega_{2h}$, we then have
    \begin{equation*}
        \Big|B_{R}\cap\{u\geq0\}\Big|\geq|\Omega_{2h}|\geq\frac{V_{2h}}{2h}\geq c R^{n}.
    \end{equation*}
    This proves the existence of the desired uniform constant $\sigma>0$.
\end{proof}

With Lemma~\ref{lem. key lemma 1} and Lemma~\ref{lem. key lemma 2}, we now prove the main result.
\begin{proof}[Proof of Theorem~\ref{thm. main}]
    Let us choose the density $\varepsilon$ in Lemma~\ref{lem. SZ25} as the universal constant $\sigma$ in Lemma~\ref{lem. key lemma 2}. Using the two functions $r_{0}(\cdot)$ and $\delta(\cdot)$ obtained in Lemma~\ref{lem. SZ25}, we set $\widetilde{r}=\max\{r_{0}(\sigma),1\}$ and $\widetilde{\delta}=\delta(\sigma)$. Using the function $h(\cdot)$ obtained in Lemma~\ref{lem. key lemma 2}, we set $\widetilde{h}=h(\widetilde{r})$. Using the function $\rho(\cdot)$ obtained in Lemma~\ref{lem. key lemma 1}, we set $\widetilde{\rho}=\rho(\widetilde{h})$. Note that as the constant $\sigma$ from Lemma~\ref{lem. key lemma 2} is universal, all other constants $\widetilde{r},\widetilde{\delta},\widetilde{h},\widetilde{\rho}$ above are also universal.

    By Lemma~\ref{lem. key lemma 1}, there exists some $x^{*}\in B_{\widetilde{\rho}}$ such that $u(x^{*})=1-\widetilde{h}$. Then, we apply Lemma~\ref{lem. key lemma 2} to the translated function $u(x-x^{*})$, and conclude that
    \begin{equation*}
        \Big|B_{\widetilde{r}}(x^{*})\cap\{u\geq0\}\Big|\geq\sigma\widetilde{r}^{n}.
    \end{equation*}
    As $\widetilde{r}\geq r_{0}(\sigma)$, we apply Lemma~\ref{lem. SZ25} to $u(x-x^{*})$, and obtain the following:
    \begin{equation*}
        \Big|B_{r}(x^{*})\cap\{u\geq0\}\Big|\geq\widetilde{\delta}r^{n}\quad\mbox{for all }r\geq\widetilde{r}.
    \end{equation*}
    
    Finally, we choose the universal constants $\delta$ and $R_{0}$ in Theorem~\ref{thm. main}. In fact, we set
    \begin{equation*}
        R_{0}=2(\widetilde{\rho}+\widetilde{r}),\quad\delta=\frac{\sigma}{2^{n}}.
    \end{equation*}
    It then follows that for all $R\geq R_{0}$, we have $B_{R/2}(x^{*})\subseteq B_{R}$ and $R/2\geq\widetilde{r}$. Then,
    \begin{equation*}
        \Big|B_{R}\cap\{u\geq0\}\Big|\geq\Big|B_{R/2}(x^{*})\cap\{u\geq0\}\Big|\geq\sigma(\frac{R}{2})^{n}=\delta R^{n}.
    \end{equation*}
    The other inequality $\Big|B_{R}\cap\{u\leq0\}\Big|\geq\delta R^{n}$ can be argued similarly. Therefore, we have finished the proof of Theorem~\ref{thm. main}.
\end{proof}

\end{document}